\newtheorem{thm}{Theorem}[section] 
\newtheorem{lem}[thm]{Lemma}
\newtheorem{rem}[thm]{Remark}
\newtheorem{ques}[thm]{Question}
\begin{document}

\title{Generating Adjoint Groups}
\author{Be'eri Greenfeld}
\thanks{The author thanks Prof.~Agata Smoktunowicz for helpful discussions.}
\maketitle
\begin{abstract}
    We prove two approximations of the open problem of whether the adjoint group of a non-nilpotent nil ring can be finitely generated: We show that the adjoint group of a non-nilpotent Jacobson radical cannot be boundedly generated, and on the other hand construct a finitely generated, infinite dimensional nil algebra whose adjoint group is generated by elements of bounded torsion.
\end{abstract}

\section{Introduction}

Nil rings give rise to multiplicative groups, called adjoint groups. If $R$ is nil (or, more generally Jacobson radical) then its adjoint group $R^\circ$ consists of the elements of $R$ as underlying set, with multiplication given by $r\circ s = r+s+rs$. We can think of $R^\circ$ as $\{1+r|r\in R\}$ with usual multiplication $(1+r)(1+s)=1+r+s+rs$. The connections between group-theoretic properties of $R^\circ$ and ring-theoretic properties of $R$ were intensively studied (for instance, see \cite{sys1, sys2, sys3, Engel}).

In this way, nil rings give rise to braces (in fact, two-sided braces, namely braces arising from Jacobson radical rings). For more information about braces and their connections with nil rings see \cite{Rump, Smoktunowicz, Engel}.

The following is an open question posed by Amber, Kazarin, Sysak \cite{3, 4} and then repeated by Smoktunowicz \cite{Smoktunowicz}:

\begin{ques} \label{question}
Is there a non-nilpotent, nil algebra whose adjoint group is finitely generated?
\end{ques}

Note that by \cite[~Theorem 4.3]{sys4}, if $R$ is Jacobson radical such that $R^\circ$ is generated by two elements then $R$ is nilpotent.

Bounded generation is a stronger notion of finite generation: it asserts that a group is generated by a finite set $\{g_1,\dots,g_n\}$ such that every element can be expressed as $g_1^{i_1}\cdots g_n^{i_n}$, namely, all elements have finite width with respect to the generating set. Boundedly generated groups are extensively studied, and for related information and results we refer the reader to \cite{bdd3, bdd5, bdd9, bdd33, pig4, bdd43}.

It is easy to see that the adjoint group of a non-nilpotent nil algebra cannot be boundedly generated; in Theorem \ref{bdd} we prove that the adjoint of a non-nilpotent Jacobson radical cannot be boundedly generated.

Suppose $R$ is a nil algebra of positive characteristic $p>0$, then its adjoint group $R^{\circ}$ is residually-p torsion. However, its torsion cannot be bounded, since (unless the algebra is nilpotent) its elements' nilpotency indices are unbounded.

In Theorem \ref{exmpl} we construct a finitely generated, infinite dimensional nil ring (in fact, a Golod-Shafarevich algebra) whose adjoint group is generated by elements of bounded torsion. Indeed, if the answer to Question \ref{question} is affirmative then it evidently yields such example; in this sense, Theorem \ref{exmpl} should be seen as an approximation of Question \ref{question}.

Our example also results in an infinite, finitely generated brace whose adjoint group is generated by a set of elements of bounded torsion.
Note that the analogous question of Question \ref{question} for braces has a positive solution: for any finite, non-degenerate involutive solution $(X,r)$ of the Yang-Baxter equation there is an associated brace $G(X,r)$, which is infinite, both of whose multiplicative and additive groups are finitely generated and also its adjoint group is; note that the additive group of $G(X,r)$ is a free abelian group (our two-sided braces have p-torsion additive groups). For details, see \cite[~Theorem 4.4]{Cedo}.

\section{Boundedly generated adjoint groups}
Recall that a group is \textit{boundedly generated} if $G=H_1\cdots H_n$ for some cyclic subgroups $H_1,\dots,H_n\subseteq G$. The minimal such $n$ is called the \textit{cyclic width} of $G$.

Note that the adjoint group of an infinite dimensional nil algebra cannot be boundedly generated, since that would mean the algebra admits a Shirshov base, namely every element would be a linear combination of monomials of the form $f_1^{i_1}\cdots f_n^{i_n}$ for some $f_1,\dots,f_n$; since the algebra is nil, there are only finitely many such monomials, so the algebra must be finite dimensional.

We now extend this result to an arbitrary Jacobson radical. Note that Jacobson radicals need not be nil, so the above argument does not hold any longer.

\begin{thm} \label{bdd}
Let $R$ be an algebra which is Jacobson radical. Then $R^\circ$ is boundedly generated if and only if $R$ is finite dimensional and nilpotent.
\end{thm}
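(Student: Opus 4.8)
I would prove the two implications separately. The reverse one is immediate: over the (finite) ground field a finite-dimensional nilpotent algebra has finite adjoint group, which is visibly boundedly generated. For the forward implication, assume $R^\circ=H_1\cdots H_n$ with each $H_i=\langle 1+a_i\rangle$ cyclic, and write $q=|k|$, $p=\operatorname{char}k$. The plan is threefold: (i) use bounded generation to force the Hilbert function $d_M:=\dim_k R/R^M$ to grow only logarithmically in $M$; (ii) deduce that the powers $R\supseteq R^2\supseteq\cdots$ stabilize, at an idempotent ideal $I=\bigcap_m R^m$ with $R/I$ finite-dimensional; (iii) kill $I$ by a finite-index/Nakayama argument.

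For step (i) I would fix $M$ and pass to the nilpotent quotient $R/R^M$, whose adjoint group is $\bar H_1\cdots\bar H_n$ with each $\bar H_i=\langle 1+\bar a_i\rangle$ cyclic. Since $a_i^M\in R^M$, the image $\bar a_i$ is nilpotent of index at most $M$ in $R/R^M$; and because $(1+x)^p=1+x^p$ in characteristic $p$, the order of $1+\bar a_i$ is the least power of $p$ that is $\ge$ that index, hence $<pM$. Therefore $q^{d_M}=\bigl|(R/R^M)^\circ\bigr|\le\prod_i|\bar H_i|<(pM)^n$, so $d_M<n\log_q(pM)$. This estimate is the crux, and the step I expect to be the genuine obstacle: one has to realize that the right thing to bound is the cardinality of the finite $p$-group $(R/R^M)^\circ$, and that it is boundedness of the generation (not mere finite generation) together with the Frobenius identity over a finite field that keeps this cardinality polynomial in $M$.

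For step (ii): the associated graded ring $\operatorname{gr}R=\bigoplus_{m\ge1}R^m/R^{m+1}$ is generated in degree one, since $R^m=R\cdot R^{m-1}$; hence as soon as one graded component vanishes, all higher ones do. If none vanished we would have $d_M\ge M-1$, contradicting the logarithmic bound for large $M$. So $R^c=R^{c+1}=\cdots$ for some $c$; set $I:=\bigcap_m R^m=R^c$. Then $\dim_k R/I=d_c<\infty$ and $I^2=R^{2c}=R^c=I$, and $I$ is again Jacobson radical, being an ideal of $R$.

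For step (iii): $R/I$ is a finite-dimensional algebra over the finite field $k$, hence finite, so $(R/I)^\circ=R^\circ/(1+I)$ is finite; thus $1+I$ has finite index in the boundedly (in particular finitely) generated group $R^\circ$ and is therefore finitely generated. Writing a finite generating set as $1+b_1,\dots,1+b_s$ together with its inverses and expanding words in it shows that $I$ is a finitely generated algebra; so, in the unitization $B=k\oplus I$ (for which $J(B)=I$), the ideal $I$ is a finitely generated left module with $I=I^2=J(B)\cdot I$, whence $I=0$ by Nakayama. Then $R^c=0$, so $R$ is nilpotent with $\dim_k R=\dim_k R/R^c<\infty$. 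Finally I would remark that finiteness of the field is genuinely used in the equivalence: over an infinite field the same count forces $R=R^M$ for every $M$ at once, so $R=R^2$ and then (being finitely generated) $R=0$, while the reverse implication already fails for the one-dimensional null algebra.
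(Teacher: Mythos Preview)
Your argument is correct and follows the same arc as the paper's: bound $\dim_k(R/R^M)$ logarithmically via the cyclic-width inequality $[G:H]\le\exp(G/H)^m$ together with the Frobenius exponent estimate, deduce $R^i=R^{i+1}$ for some $i$, and finish with Nakayama. Two minor differences are worth noting. First, the paper places the reduction to a finite base field at the outset, observing that $(R/R^2,+)$ is a nonzero $F$-vector space occurring as a quotient of $R^\circ$, so $(F,+)$ must be finitely generated and hence $F$ finite; you relegate this to a closing remark, and your phrase ``the same count'' does not quite cover characteristic zero, since the exponent bound you use relies on Frobenius. Second, your step~(iii) is more explicit than the paper's one-line ``by Nakayama'': you pass to the finite-index subgroup $1+I\le R^\circ$ to exhibit $I$ as a finitely generated algebra before invoking Nakayama in $k\oplus I$, whereas the paper tacitly uses that $R$ itself is a finitely generated algebra (for the same reason) and applies Nakayama directly to $R^i$ over $k\oplus R$.
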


\begin{proof}
First observe that we only have to deal with the case where the base field $F$ is finite. Indeed, observe by Nakayama's lemma that $R/R^2$ is a non-zero $F$-vector space. Therefore the additive group of $R/R^2$, being isomorphic to $(R/R^2)^\circ$ occurs as a homomorphic image of $R^\circ$. Hence, if $R^\circ$ is finitely generated then so is the additive group of the base field $F$, from which it follows that $F$ must be finite. We henceforth assume $F=\mathbb{F}_p$.

Assume that $R$ is an algebra which is Jacobson radical and $R^\circ$ is boundedly generated with cyclic width $m$. Denote $G_n=(R^{n+1})^\circ\triangleleft R^\circ$. Note that $G_n$ has finite index since $R/R^{n+1}$ is finite $F$-dimensional.
Given a finite group $G$ denote by $exp(G)$ the minimum power $\alpha$ such that $g^\alpha=e$ for all $g\in G$.

Observe that $exp(R^\circ/G_n)\leq p(n+1)$ since given $f\in R$, we have that: $$(1+f)^{p^{\lceil \log_p (n+1) \rceil}}=1+f^{p^{\lceil \log_p (n+1)\rceil}}\in G_n$$
(as: $f^{p^{\lceil \log_p (n+1)\rceil}}\in R^{n+1}$ and $n+1\leq p^{\lceil \log_p (n+1)\rceil}\leq p(n+1)$.)

Note that if a group $G$ is boundedly generated of cyclic width $m$ then for any finite index normal subgroup $H$ we have that $[G:H]\leq exp(G/H)^m$ and in particular in our case: $$p^{\dim_{\mathbb{F}_p}(R/R^{n+1})}=[R^\circ:G_n]\leq exp(R^\circ/G_n)^m\leq (p(n+1))^m.$$

It follows that (denoting $R^0=R$): $$\sum_{i=0}^{n}\dim_{\mathbb{F}_p}(R^{i}/R^{i+1})=\dim_{\mathbb{F}_p}(R/R^{n+1})\leq m(1+\log_p (n+1))<n$$
for $n\gg 1$, hence for some $0\leq i\leq n$ we have that $R\cdot R^i=R^{i+1}=R^i$. Since $R$ is a finitely generated algebra which is Jacobson radical it follows by Nakayama's lemma that $R^i=0$. Hence $R$ is a finite dimensional, nilpotent algebra.
\end{proof}

\begin{rem}
Note that in fact, Theorem \ref{bdd} requires a weaker assumption on $R^\circ$ than being boundedly generated: if $R$ is a Jacobson radical whose adjoint group is a \textit{polynomial index growth (PIG)} group, namely group in which the index of any subgroup is polynomially bounded by its relative exponent, then $R$ is nilpotent. For more information and results on PIG groups, see \cite{pig1, pig2, pig3, pig4}.
\end{rem}

\section{Adjoint group generated by a uniformly torsion set}

Let $F$ be a countable field of characteristic $p>0$. Consider the free algebra $A=F\left<x,y\right>$. Observe that $A=\bigoplus_{n=0}^{\infty} A(n)$ is graded by $\deg(x)=\deg(y)=1$. Denote $A^{+}=\bigoplus_{n=1}^{\infty}A(n)$. For $a\in A$, let $v(a)=\max\{d|a\in \bigoplus_{i\geq d}A(i)\}$.

\begin{lem} \label{far_enough}
Let $a\in A^{+}$ and write $a=a_1+\cdots+a_n$ a sum of homogeneous elements. For every $m\geq 1$ there exist some homogeneous elements $h_1,\dots,h_k$ and an element $b\in A^{+}$ with $v(b)\geq m$ such that: $$1+a+b=(1+h_1)\cdots(1+h_k).$$
\end{lem}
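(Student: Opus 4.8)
The plan is to pull the whole problem back to a computation in the finite-dimensional graded quotient $\bar A := A/\bigoplus_{i\geq m}A(i)$. Since $\bigoplus_{i\geq m}A(i)$ is a homogeneous two-sided ideal, $\bar A$ is a graded $F$-algebra with $\dim_F\bar A<\infty$, and for $d<m$ the space $A(d)$ maps isomorphically onto the degree-$d$ part of $\bar A$; write $\bar c$ for the image of $c\in A$. I claim it suffices to find homogeneous elements $h_1,\dots,h_k\in A$ (of degrees $<m$) with $(1+\bar h_1)\cdots(1+\bar h_k)=1+\bar a$ in $\bar A$: given this, $b:=(1+h_1)\cdots(1+h_k)-1-a$ automatically lies in $\bigoplus_{i\geq m}A(i)$, hence $v(b)\geq m$ and $b\in A^{+}$, which is exactly the desired identity. (The assumption that $a$ be written as a sum of homogeneous elements is automatic; it is only there so one may speak of the homogeneous components of $a$.)

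To prove the claim in $\bar A$ I would induct on $m-v(a)$, with the convention $v(\bar a)=m$ when $\bar a=0$. If $\bar a=0$ the empty product works. Otherwise put $d:=v(a)$, so $1\leq d<m$; let $c_d\in A(d)$ be the degree-$d$ homogeneous component of $a$ and set $h_1:=c_d$. As $\bar h_1$ is a positive-degree homogeneous element of a finite-dimensional graded algebra it is nilpotent, so $1+\bar h_1$ is a unit of $\bar A$ with $(1+\bar h_1)^{-1}=1-\bar h_1+\bar h_1^{2}-\cdots$ (a finite sum). Then
\[
(1+\bar h_1)^{-1}(1+\bar a)=(1+\bar h_1)^{-1}\bigl(1+\bar h_1+\overline{a-c_d}\bigr)=1+(1+\bar h_1)^{-1}\overline{a-c_d},
\]
and the bottom homogeneous term of $(1+\bar h_1)^{-1}\overline{a-c_d}$ is just $\overline{a-c_d}$, so lifting it back to an element $a^{(2)}\in A^{+}$ supported in degrees $<m$ gives $v(a^{(2)})\geq v(a-c_d)\geq d+1>d=v(a)$, with $a^{(2)}$ again a sum of homogeneous elements. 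Applying the induction hypothesis to $a^{(2)}$ yields homogeneous $h_2,\dots,h_k$ with $(1+\bar h_2)\cdots(1+\bar h_k)=1+\bar a^{(2)}=(1+\bar h_1)^{-1}(1+\bar a)$; left-multiplying by $1+\bar h_1$ closes the induction, and the recursion terminates after at most $m$ steps since $v$ of the successive targets strictly increases while staying $\leq m$.

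The step I expect to require the most care — and essentially the only point at which anything could go wrong — is precisely this peeling off of the bottom degree, because in $A$ itself $(1+c_d)^{-1}$ is the genuinely infinite series $1-c_d+c_d^{2}-\cdots\notin A$, so a naive recursion carried out in $A$ neither stays inside $A$ nor visibly terminates; passing to $\bar A$ dissolves both difficulties at once. If one prefers to stay inside $A$, the same argument works after replacing $(1+c_d)^{-1}$ everywhere by its truncation $\sum_{j=0}^{\lceil m/d\rceil}(-c_d)^{j}$ and carrying explicit correction terms of valuation $\geq m$ (using $(1+c_d)\sum_{j=0}^{J}(-c_d)^{j}=1-(-c_d)^{J+1}$): the content is identical, only the bookkeeping is heavier. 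Phrased topologically, the lemma says that in the completion $\widehat A$ the subgroup of the adjoint group generated by $\{1+h:h\ \text{homogeneous of positive degree}\}$ is dense in $1+\widehat A^{+}$.
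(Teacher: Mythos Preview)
Your argument is correct. Both your proof and the paper's rest on the same mechanism---peel off homogeneous pieces so as to push the remaining discrepancy into higher and higher degree---but the packaging differs. You pass to the finite-dimensional graded quotient $\bar A=A/\bigoplus_{i\ge m}A(i)$ and induct on $m-v(a)$, at each step removing the lowest component $c_d$ of $a$ by left-multiplying by $(1+\bar c_d)^{-1}$; the paper stays inside $A$, phrases the argument as a contradiction (take $b$ with $v(b)$ maximal among all expressions $1+a+b=\prod_j(1+h_j)$), and in one stroke right-multiplies by $(1-b_1)\cdots(1-b_m)$ to cancel every homogeneous component of the error $b$ simultaneously, producing a new error of strictly larger valuation. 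Your passage to $\bar A$ buys you an honest inverse of $1+c_d$ and makes termination automatic, at the price of an extra layer of notation; the paper's version avoids the quotient but hides the inverse-versus-truncation issue inside the identity $(1-b_1)\cdots(1-b_m)=1-b+c$ with $v(c)>v(b)$. The mathematical content is the same.
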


\begin{proof}
Assume on the contrary that $b\in A^{+}$ is such that $1+a+b=(1+h_1)\cdots(1+h_k)$ for some homogeneous $h_1,\dots,h_k$ with $d=v(b)$ maximal. Write $b=b_1+\cdots+b_m$ a sum of homogeneous elements. By maximality of $d$ we get that $d=\min\{\deg(b_i)|1\leq i\leq m\}$.

Note that:
\begin{eqnarray*}
(1+h_1)\cdots(1+h_k)(1-b_1)\cdots(1-b_m) & = & (1+a+b)(1-b_1)\cdots(1-b_m)= \\ (1+a+b)(1-b+c)&=&1+a+(c+ac+bc-ab-b^2).
\end{eqnarray*}

with: $$c=\sum_{S\subseteq \{1,\dots,m\};|S|>1} \prod_{i\in S}b_i.$$ Observe that $v(ab),v(b^2),v(c)>d$ and hence $v(c+ac+bc-ab-b^2)>d$, contradicting the maximality of $d$.
\end{proof}

\begin{thm} \label{exmpl}
There exists a finitely generated, graded, infinite dimensional nil algebra $R$ whose adjoint group $R^{\circ}$ is generated by a set of elements of bounded torsion.
\end{thm}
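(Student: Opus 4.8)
The plan is to build $R$ as a quotient of the free algebra $A=\mathbb F_p\langle x,y\rangle$ by a homogeneous ideal $I$ chosen so that $R=A/I$ is simultaneously (i) nil, (ii) infinite dimensional, and (iii) Golod--Shafarevich, and moreover so that the adjoint group $R^\circ$ is generated by the images of elements of the form $1+h$ with $h$ homogeneous of degree a power of $p$ — such elements have torsion dividing $p^{\lceil \log_p N\rceil}$ once $h^{p^{\lceil \log_p N\rceil}}$ vanishes, but to get \emph{bounded} torsion one must be more careful: the key idea is to kill, inside $I$, all $p$-th powers of elements past a fixed degree, so that every homogeneous $h$ of degree $\geq 1$ satisfies $h^p\in I$ (or $h^{p}$ lies deep enough), forcing $(1+h)^p=1$ in $R^\circ$; then every $1+h$ has torsion exactly $p$. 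I would therefore interleave two families of relations: relations ensuring nilpotency/Golod--Shafarevich sparsity (finitely many relations in each degree, satisfying the Golod--Shafarevich inequality $\sum r_d t^d < $ something like $(1-2t)/(1-t)$ or the standard $1-2t+\sum r_d t^d$ having a positive root), and the relations $h^p$ for a suitable cofinal set of homogeneous $h$; one must check the $p$-power relations do not themselves overwhelm the Golod--Shafarevich count, which works because raising to the $p$-th power multiplies degree by $p$, so the number of new relations in degree $pd$ coming from degree-$d$ generators of the relation module is controlled.

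Concretely, the steps in order: First, fix the generating set for $R^\circ$ — I claim $\{1+x,\,1+y\}$ together with finitely many further elements $1+h_i$ suffice, and in fact Lemma \ref{far_enough} is exactly the engine that lets one express any $1+a$ ($a\in A^+$) as a product $(1+h_1)\cdots(1+h_k)$ up to an error $b$ of arbitrarily high valuation; since in $R$ the high-valuation part eventually lies in $I$ (because $R$ is nil and graded, high powers vanish, but more to the point one uses that $R$ will be chosen so that $R^{\geq m}$ is generated as claimed), the product formula becomes exact in $R$. Second, set up the relation ideal: let $I=\langle\, h^p : h\in S\,\rangle + J$ where $S$ is a carefully chosen set of homogeneous elements (e.g. all monomials of each degree, or a basis of each graded piece of $A^+/J$) and $J$ is an auxiliary Golod--Shafarevich ideal guaranteeing infinite dimensionality; verify via the Golod--Shafarevich inequality that $R=A/I$ is infinite dimensional. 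Third, verify $R$ is nil: every homogeneous element has a $p$-th power in $I$ hence is nilpotent modulo lower-degree-free issues, and a standard argument (every element is a sum of homogeneous pieces of bounded degree, and in a graded nil-generated algebra one lifts nilpotency) gives that every element of $R$ is nilpotent. Fourth, conclude the torsion bound: for homogeneous $h$, $(1+h)^p=1+h^p=1$ in $R^\circ$; for a general element $1+a$, write it via Lemma \ref{far_enough} and the preceding step as a finite product of $(1+h_j)$'s with $h_j$ homogeneous, whence $1+a$ lies in the subgroup generated by elements of torsion dividing $p$.

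The main obstacle I anticipate is the tension between the three ring-theoretic requirements and the group-theoretic one. Imposing $h^p\in I$ for \emph{enough} $h$ to force every $1+h$ to have order $p$ is a lot of relations; one must confirm that the Golod--Shafarevich inequality still has room — i.e. that $2t - \sum_d \dim(\text{relations in degree } d)\,t^d$ stays above the critical threshold near its relevant root. Because the $p$-power relations sit in degrees that are multiples of $p$ and there the "budget" $2^{pd}$ of monomials is huge, this should be fine, but it needs an honest computation: one chooses $S$ to be, say, a minimal set of homogeneous generators of a suitable submodule, keeping $|S\cap \text{degree } d|$ small, or alternatively builds $R$ as an inverse limit / uses Smoktunowicz-style constructions of nil graded Golod--Shafarevich algebras and then adjoins the $p$-power relations. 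A secondary subtlety is making the product expression from Lemma \ref{far_enough} genuinely finite and with a \emph{uniformly} bounded generating set: the lemma produces $h_1,\dots,h_k$ depending on $a$, so to get a single finite generating set of $R^\circ$ one argues that it suffices for the generating set to contain $1+x,1+y$ and that products of these already realize all $1+h$ with $h$ a monomial (clear, since $1+x^i$ etc. are themselves such products up to sign — here characteristic $p$ and the identity $(1+x)^{p^k}=1+x^{p^k}$ help), so the \emph{whole} adjoint group is generated by $\{1+x, 1+y\}$ modulo the high-valuation tail, which vanishes in $R$. Assembling these pieces — especially certifying infinite-dimensionality under the extra relations — is where the real work lies.
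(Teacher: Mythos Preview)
Your outline has the right architecture --- quotient of the free algebra, impose $h^{p^\alpha}=0$ for homogeneous $h$, invoke Golod--Shafarevich, and use Lemma~\ref{far_enough} to reduce arbitrary $1+a$ to products of $(1+h)$'s --- but two load-bearing steps are missing or incorrect. First, Lemma~\ref{far_enough} gives $1+a+b=(1+h_1)\cdots(1+h_k)$, not $1+a$ itself; you need $b$ to vanish in $R$, and your justification (``$R$ is nil and graded, high powers vanish'') does not work, since nil is not nilpotent and there is no reason for $R^{\geq m}$ to be zero. The paper solves this by \emph{enumerating} $A^{+}=\{f_1,f_2,\dots\}$, applying the lemma to each $f_l$ with $v(b_l)$ strictly larger than the top degree appearing in $b_{l-1}$, and then throwing the homogeneous components of every $b_l$ into the relation ideal $I$. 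This costs at most one relation per degree (starting at degree $14$), which is cheap enough to preserve the Golod--Shafarevich inequality. Without this enumeration-and-kill device you have no way to make the tail $b$ disappear.

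Second, your step ``verify $R$ is nil: every homogeneous element has $h^p\in I$ \dots\ a standard argument lifts nilpotency'' is a genuine gap: there is no such standard argument, and a graded algebra all of whose homogeneous elements are nilpotent need not be nil. The paper does not attempt to prove $A^{+}/(I+J)$ is nil directly; instead it shows this algebra is Golod--Shafarevich and then quotes Wilson's theorem to pass to an infinite-dimensional \emph{nil} homomorphic image $R$. The bounded-torsion generating set survives in the quotient. Finally, note that the theorem only asserts generation by a \emph{set} of elements of bounded torsion (namely all $1+h$ with $h$ homogeneous, each of order dividing $p^{\alpha}$), not by finitely many elements; your attempt to cut down to $\{1+x,1+y\}$ would, if it worked, answer Question~\ref{question} itself.
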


\begin{proof}

Enumerate $A^+=\{f_1,f_2,\dots\}$. Using Lemma \ref{far_enough} pick $b_1$ with $v(b_1)\geq 14$ such that $1+f_1+b_1=(1+h_{1,1})\cdots(1+h_{1,k_1})$ for some homogeneous elements $h_{1,1},\dots,h_{1,k_1}$. Write $b_1=b_{1,1}+\cdots+b_{1,n_1}$ a sum of homogeneous elements with $\deg(b_{1,d})=d$. Next use Lemma \ref{far_enough} to choose $b_2$ with $v(b_2)\geq n_1+1$ and $1+f_2+b_2=(1+h_{2,1})\cdots(1+h_{2,k_2})$ for some homogeneous elements $h_{2,1},\dots,h_{2,k_2}$. Again, write $b_2=b_{2,n_1+1}+\cdots+b_{2,n_2}$ a sum of homogeneous elements with $\deg(b_{1,d})=d$, and inductively proceed, each time constructing $b_{l+1}$ with $v(b_{l+1})\geq n_l+1$ such that $1+f_{l+1}+b_{l+1}=(1+h_{l+1,1})\cdots(1+h_{l+1,k_{l+1}})$ for suitable homogeneous elements $h_{l+1,1},\dots,h_{l+1,k_{l+1}}$.

Let $I\triangleleft A^+$ be the ideal generated by $\{b_{i,j}|i\geq 1\}$. Note that $I$ is generated by at most one homogeneous element of each degree, starting from $14$.

Now let $J$ be the ideal generated by $h^{p^\alpha}$ (with $\alpha=\lceil \log_p 7\rceil$) for all homogeneous elements $h\in A^+$. Observe that $J$ is generated by $2^d$ elements of degree $p^\alpha d\geq 7d$ for $d\geq 1$.

We claim that $A^+/(I+J)$ is a Golod-Shafarevich algebra. Indeed, let $r_n$ denote the number of degree $n$ generators of $I+J$. Then for all $\tau>0$ for which the series are convergent: $$\sum_{n>1} r_n\tau^n\leq\sum_{d=1}^{\infty}2^d\tau^{7d}+\sum_{n=3} \tau^n=\frac{2\tau^7}{1-2\tau^7}+\frac{\tau^{14}}{1-\tau}.$$

Setting $f(\tau)=1-2\tau+\sum_{n>1} r_n\tau^n$, we get that: $$f(0.75)\approx-0.5+0.3642+0.071<0,$$
hence $A^+/(I+J)$ is a Golod-Shafarevich algebra. Note that the semigroup $\left(A^+/(I+J)\right)^\circ$ is generated by the set $\{1+f|f\ homogeneous\}$, whose elements are torsion of order $\leq p^\alpha$. Now by \cite{Wilson} it follows that $A^+/(I+J)$ admits a homomorphic image which is an infinite dimensional and nil algebra, say $R$. Then $R$ satisfies the claimed properties.

\end{proof}

\section{Further questions}

We conclude with two open questions related to Question \ref{question}:

\begin{ques}
Is there an infinite dimensional, finitely generated nil algebra with polynomial growth whose adjoint group is finitely generated?
\end{ques}

In \cite{amb1} it is proved that every elementary amenable subgroup of the adjoint group of a nil ring is locally nilpotent.

\begin{ques}
Is there an infinite dimensional, finitely generated nil algebra $R$ whose adjoint group $R^\circ$ is amenable? Is it possible that $R^\circ$ has intermediate growth?
\end{ques}

\end{document}